\theoremstyle{plain}
\newtheorem{theorem}{Theorem}
\newtheorem{lemma}[theorem]{Lemma}
\newtheorem{proposition}[theorem]{Proposition}
\newtheorem{definition}[theorem]{Definition}
\numberwithin{theorem}{section}
\numberwithin{equation}{section}
\newcommand{\ip}[2]{\ensuremath{\left\langle#1,#2\right\rangle}}
\newcommand{\B}{\mathbb{B}}
\newcommand{\C}{\mathbb{C}}
\newcommand{\cA}{\mathcal{A}}
\newcommand{\cL}{\mathcal{L}}
\newcommand{\cS}{\mathcal{S}}
\newcommand{\SL}{\mathrm{SL}}
\newcommand{\SU}{\mathrm{SU}}
\newcommand{\U}{\mathrm{U}}
\newcommand{\id}{\mathrm{id}}
\newcommand{\Berg}{\cA^2(\B^n)} 
\newcommand{\bddf}{L^\infty(\B^n)} 
\newcommand{\bddfG}{L^\infty(G)} 
\newcommand{\Lone}{L^1(\B^n,d\lambda)} 
\newcommand{\LoneG}{L^1(\G)} 
\newcommand{\bdd}{\cL(\cA^2)}
\newcommand{\traceop}{\cS^1(\cA^2)}
\newcommand{\toepalg}{\mathfrak{T}(L^\infty)}
\newcommand{\wloc}{\mathcal{W}_{loc}^\beta(\cA^2)}
\newcommand{\toep}{\mathcal{T}(L^\infty)}
\newcommand{\haar}[1]{\ d\mu_\G(#1)}
\newcommand{\inv}[1]{\ d\lambda(#1)}
\newcommand{\actopL}[2]{L_{#1}(#2)}
\newcommand{\actfL}[2]{\ell_{#1}(#2)}
\newcommand{\act}[2]{\tau_{#1}(#2)}
\newcommand{\G}{G}
\newcommand{\Br}{\boldsymbol{B_r}}
\newcommand{\Dr}{D_r}
\newcommand{\I}[1]{I_{#1}^\beta}
\keywords{Toeplitz operators, Toeplitz algebra, Bergman space}
\author{Vishwa Dewage }
\author{Mishko Mitkovski }
\thanks{M.~M. was supported in part by NSF grant DMS-2000236.}
\address{Department of Mathematics, Clemson University\\
South Carolina, SC 29634, USA\\
E-Mail Dewage:vdewage@clemson.edu
E-Mail Mitkovski: mmitkov@clemson.edu}
\subjclass[2000]{22D25, 30H20, 47B35, 47L80}
\title[Density of Toeplitz operators in the Toeplitz algebra]{On the density of Toeplitz operators in the Toeplitz algebra over the Bergman space of the unit ball}
\begin{document}


\begin{abstract}
    We use quantum harmonic analysis and representation theory to provide a new proof of Xia's theorem: "Toeplitz operators are norm dense in the Toeplitz algebra over the Bergman space of the unit ball."
\end{abstract}

\maketitle


\section{Introduction}
In quantum harmonic analysis, we discuss classical harmonic analysis notions such as convolutions and Fourrier transforms for operators.
Currently, Werner's quantum harmonic analysis (QHA) \cite{W84}, is being used effectively in many areas of mathematics including time-frequency analysis, mathematical physics and operator theory \cite{BBLS22, FHL24, KL21, KLSW12, LS20, S24}. QHA was found to be well-suited to study Toeplitz operators on Fock spaces \cite{DM23, DM24, DM24a, FH23, F19, FR23}.

In \cite{X15}, Xia proves that the Toeplitz operators are norm-dense in the Toeplitz algebra over both the Bergman space and the Fock space. He utilizes the notion of weakly localized operators introduced in \cite{IMW13} for this purpose. In \cite{F19}, among other things, Fulsche uses QHA to reprove this density result. As discussed in \cite{DDMO24}, QHA on the Bergman space over the unit ball, is more challenging due to the noncommutativity of the underlying group. However, the available theory is still useful to study Toeplitz operators. In this note, we employ QHA on Bergman spaces to simplify and clarify certain aspects of Xia's proof of the density result for the Bergman space.

This article is organized as follows. In Section \ref{sec:prelim}, we recall some facts on the Bergman space, on representation theory, and on quantum harmonic analysis. In Section \ref{sec:wloc}, we present our proof of Xia's theorem: "Toeplitz operators are norm-dense in the Toeplitz algebra over the Bergman space."

\section{Preliminaries}\label{sec:prelim}

The Bergman space $\Berg$ is the space of all holomorphic functions that are square-integrable w.r.t. the normalized Lebesgue measure $dz$. It is well known that $\Berg$ is a reproducing kernel Hilbert space with reproducing kernel
\[
   K(w,z)=K_z(w)=\frac{1}{(1-\Bar{z}w)^{n+1}},\ \ z,w\in \B^n.
\]
We have the reproducing formula:
$$f(z)=\ip{f}{K_z},\quad z\in\B^n, f\in \Berg.$$
Note that the normalized reproducing kernels $k_z$ are given by
$$k_z(w)=\frac{K_z(w)}{\|K_z\|}=\frac{(1-|z|^2)^{(n+1)/2}}{(1-\Bar{z}w)^{n+1}},\ \ z,w\in \B^n.$$

The Bergman space $\Berg$ is a closed subspace of $L^2(\B^n,dz)$ and the Bergman projection $P:L^2(\B^n,dz)\to \Berg$ is given by
\[
    (Pf)(z)=\ip{f}{K_z},\ \ z\in \B^n,\ \ f\in L^2(\B^n,dz).
\]

\subsection{Toeplitz operators and Berezin transforms}
We denote the algebra of bounded operators on $\Berg$ by $\bdd$ and the algebra of all trace-class operators on $\Berg$ is denoted by $\traceop$.
Given a function $a:\C^n\to \C$ the Toeplitz operator $T_a$ with symbol $a$ is defined formally by 
\[
   T_af=P(af),\ \ f\in \Berg.
\]
If $a\in \bddf$ then $T_a\in \bdd$   with  $\|T_a\|\leq \|a\|_\infty$. 
The \textit{Toeplitz algebra} $\toepalg$ 
is the $C^*$-subalgebra of $\bdd$  generated by the Toeplitz operators with symbols in $\bddf$.

\subsection{The representation of \texorpdfstring{$\SU(n,1)$}{SU(n,1)} on the Bergman space}
Denote by $\SU (n,1)$ the elements in $\SL(n+1,\C)$ that preserve the hermitian form $(z,w)_{n,1} = z_1\overline{w_1} + \cdots + z_n\overline{w_n} - z_{n+1}\overline{w_{n+1}}$. 
Throughout this article, we write $\G=\SU(n,1)$ and  $K=\U(n)$.
We identify $K$ as a subgroup of $G$ by the map 
\[A\mapsto \Tilde{A}=\begin{pmatrix} A & 0\\ 0 & 1/\det A\end{pmatrix}.\] 
An element $g=\begin{pmatrix} A & u \\ v^* & d \end{pmatrix}\in G$, where $A\in M(n,\C)$, $d\in \C$, and $u,v\in \C^n$, acts on $\B^n$ by the fractional linear transformations:
\begin{equation}\label{def:Action}
    \left(\begin{matrix} A & u \\ v^* & d \end{matrix}\right) \cdot z := \frac{Az + u}{v^* z + d} = \frac{1}{\langle z, v\rangle +d} (Az+u), \quad z\in \B^n, \,\,  \begin{pmatrix} A & u \\ v^* & d \end{pmatrix}\in G .
\end{equation}
The group $G$ acts transitively on $\B^n$ by this action, and we have $K\cdot 0=0$.
Therefore we identify $G/K=\B^n$ via the map $gK \mapsto g\cdot 0$.

The translation of a function $f:\B^n\to\C$ by $g\in G$ is given by
$$(\actfL{g}{f})(w)=f(g^{-1}\cdot w)$$

The discrete series representation $\pi$ of $G$ acting on $\Berg$ is given by
$$\pi(g)f=j(g^{-1},\cdot)\actfL{g}{f}, \ \ f\in \Berg$$
where the cocycle $j$ is given by
$$j( g, z) = (v^*z + d)^{-(n+1)},\quad
    z\in \B^n,\,\, g= \begin{pmatrix} A & u \\ v^* & d \end{pmatrix}  \in \G.$$
We have the following useful properties of the cocycle $j$ from Lemma 2.1 in \cite{DDMO24}:
\begin{equation}\label{eq:cocyclezero}
|j(g,0)|=(1-|g\cdot 0|^2)^{\frac{1}{2}(n+1)},\quad g\in G   
\end{equation}
and
\begin{equation}
j(g^{-1},\cdot)=\overline{j(g,0)}K_{g\cdot 0},\quad g\in G,
\end{equation}
where $K_{g\cdot 0}$ is a reproducing kernel. Then it follows that
\begin{equation}\label{eq:cocyclerep}
    \pi(g)1=\overline{j(g,0)}K_{g\cdot 0},\quad g\in G,
\end{equation}
where $1$ denotes the constant function one. Clearly,
\begin{equation}\label{eq:normalizedrep}
    k_{g\cdot 0}=|j(g,0)|K_{g\cdot 0},\quad g\in G.
\end{equation}
However, in general, $\pi(g)1\neq k_{g\cdot 0}$.

\subsection{Functions on the ball as functions on the group}
We define the invariant measure on $\B^n$ by
$$d\lambda(z)=\frac{1}{(1-|z|^2)^{n+1}}dz.$$
A function on the ball $f:\B^n\to \C$ can be identified with a function on the group, $\Tilde{f}:\G\to \C$ by setting
$$\Tilde{f}(g)=f(g\cdot 0),\quad g\in G.$$
In particular if $f\in \Lone$, we have that
\begin{equation}\label{eq:integral}
    \int_{G} \Tilde{f}(g) \haar{g}=\int_{\B^n} f(z) \inv{z}
\end{equation}
by taking a suitable normalization $\mu_G$  of the Haar measure on $G$. Then it follows that
\begin{equation}\label{eq:integraldz}
    \int_{G} |j(g,0)|^2 \Tilde{f}(g) \haar{g}=\int_{\B^n} f(z) \ dz.
\end{equation}

\subsection{Schur orthogonality relations}
It is well-known that $\pi$ is a square-integrable irreducible unitary representation. Given $f_1,f_2\in \Berg$, the matrix coefficients $\pi_{f_1,f_2}:G\to\C$ are given by
$$\pi_{f_1,f_2}(g)=\ip{f_1}{\pi(g)f_2},\quad g\in G.$$
Then by the Schur-orthogonality relations we have the following:
$$\ip{\pi_{f_1,f_2}}{\pi_{f_3,f_4}}=\frac{1}{d_\pi}\ip{f_1}{f_3}\ip{f_2}{f_4}$$
where $d_\pi>0$ is called the formal dimension of $\pi$. In the following lemma, we compute $d_\pi$.

\begin{lemma}
    The formal dimension $d_\pi$ is given by $d_\pi=1$.
\end{lemma}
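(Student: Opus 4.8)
The plan is to extract $d_\pi$ by evaluating the Schur orthogonality relation on the single simplest vector, namely the constant function $1\in\Berg$, for which the matrix coefficient degenerates into the cocycle $j(\cdot,0)$ whose behaviour is already controlled by the normalization identities. Concretely, I would set $f_1=f_2=f_3=f_4=1$ and first compute the matrix coefficient $\pi_{1,1}$. Using \eqref{eq:cocyclerep} we have $\pi(g)1=\overline{j(g,0)}K_{g\cdot 0}$, so that
\[
\pi_{1,1}(g)=\ip{1}{\pi(g)1}=j(g,0)\,\ip{1}{K_{g\cdot 0}}=j(g,0),
\]
where the last equality is the reproducing formula $\ip{1}{K_z}=1$ together with the fact that the inner product is conjugate-linear in its second slot.

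With this identification the two sides of the Schur relation become explicit. On the left, working in $L^2(\G)$,
\[
\ip{\pi_{1,1}}{\pi_{1,1}}=\int_{\G}|j(g,0)|^2\haar{g}=\int_{\B^n}1\,dz=1,
\]
where the middle step is exactly the normalization identity \eqref{eq:integraldz} applied to $f\equiv 1$ (so that the associated function on the group is identically $1$) and the last step uses that $dz$ is the \emph{normalized} Lebesgue measure. On the right, since $K_0\equiv 1$ we have $\|1\|^2=\ip{1}{K_0}=1$, hence
\[
\frac{1}{d_\pi}\ip{1}{1}\ip{1}{1}=\frac{1}{d_\pi}.
\]
Equating the two expressions forces $d_\pi=1$.

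The computation is short and presents no real obstacle; the only points requiring care are bookkeeping ones. I must track the conjugate-linearity convention correctly so that the factor $\overline{j(g,0)}$ in \eqref{eq:cocyclerep} reappears as $j(g,0)$ in the matrix coefficient, and I must be sure that the normalization of the Haar measure used in \eqref{eq:integraldz} is the \emph{same} one implicitly fixed in the Schur relation. Both are guaranteed here, since the measure on $\G$ was pinned down in \eqref{eq:integral}--\eqref{eq:integraldz} via the invariant measure on $\B^n$, and the value $\|1\|=1$ is consistent with $K(0,0)=1$.
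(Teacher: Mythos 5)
Your proof is correct and follows essentially the same route as the paper: apply the Schur orthogonality relation with all four vectors equal to the constant function $1$, use \eqref{eq:cocyclerep} to express the matrix coefficient via the cocycle, and then invoke \eqref{eq:integraldz} together with the reproducing formula and the normalization of $dz$ to evaluate $\ip{\pi_{1,1}}{\pi_{1,1}}=1$. The only cosmetic difference is that you simplify $\pi_{1,1}(g)=j(g,0)$ before integrating, whereas the paper keeps $\ip{1}{K_{g\cdot 0}}\ip{K_{g\cdot 0}}{1}$ inside the integral and simplifies afterward.
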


\begin{proof}
    Note that by Schur orthogonality relations 
    \begin{align*}
        \ip{\pi_{1,1}}{\pi_{1,1}}=\frac{1}{d_\pi}\ip{1}{1}\ip{1}{1}
        = \frac{1}{d_\pi}.
    \end{align*}
    Also,
    \begin{align*}
        \ip{\pi_{1,1}}{\pi_{1,1}}&=\int_G \ip{1}{\pi(g)1}\ip{\pi(g)1}{1}\haar{g}\\
        &=\int_G |j(g,0)|^2\ip{1}{K_{g\cdot 0}}\ip{K_{g\cdot 0}}{1}\haar{g} \quad \text{(by \ref{eq:cocyclerep})} \\
        &= \int_{\B^n} \ip{1}{K_z}\ip{K_z}{1} \ dz\ \ \text{(by \ref{eq:integraldz})}\\
        &=\int_{\B^n} 1 \ dz \ \ \text{(by the reproducing formula)}\\
        &=1.
    \end{align*}
    Then $d_\pi=1$ as required.
\end{proof}

\subsection{Quantum harmonic analysis on the Bergman space}
Here we collect a few notions from QHA, to be used later in this article. For a detailed consideration of QHA on the Bergman space, we point to \cite{DDMO24}. First, we recall the usual convolutions of functions.
Given functions $\psi:G\to \C$ and $a:\B^n\to \C$, we define the convolution $\psi\ast a:\B^n\to \C$ formally by
$$(\psi\ast a)(z)=\int_{G} \psi(g)(\actfL{g}{a})(z)\haar{g}=\int_{G} \psi(g)f(g^{-1}\cdot z)\haar{g}.$$
Let $p,q\in[1,\infty]$ s.t. $\frac{1}{p}+\frac{1}{q}=1$. Then by Young's inequality, we have the following:
If $\psi\in L^p(G)$ and $a\in L^q(\B^n,d\lambda)$ then $\psi\ast a\in \bddf$ with\\ 
    $$\|\psi\ast a\|_\infty\leq \|\psi\|_p\|a\|_q.$$

The translation of an operator $S\in \bdd$ by $g\in G$ is given by
$$\actopL{g}{S}=\pi(g)S\pi(g)^*.$$

Now we define the convolution of a function and an operator. 
Given a function $\psi:G\to \C$ and and an operator $S\in \bdd$, we define the QHA convolution $\psi\ast S$ formally by the weak-integral
$$\psi\ast S=\int_G\psi(g) \actopL{g}{S} \haar{g}.$$
When $\psi\ast S$ exists as a bounded operator, we have
$$\ip{(\psi\ast S)f_1}{f_2}=\int_G \psi(g)\ip{\actopL{g}{S}f_1}{f_2}\haar{g},\quad f_1,f_2\in \Berg.$$
Then by the QHA-Young's inequality,
We have the following:
\begin{enumerate}
    \item If $\psi\in \LoneG$ and $S\in \bdd$, then $\psi\ast S\in \bdd$ with\\
    $\|\psi\ast S\|\leq \|\psi\|_1\|S\|$.
    \item If $\psi\in \bddfG$ and $S\in \traceop$, then $\psi\ast S\in \bdd$ with\\
    $\|\psi\ast S\|\leq \|\psi\|_\infty\|S\|_1$.
\end{enumerate}
The first identity above is easy to verify. A proof of the second can be found in \cite{H23} or \cite{DDMO24}.

Let $\Phi$ be the rank one operator $\Phi=1\otimes 1$. Then for $a\in \bddf$, the Toeplitz operator $T_a$ can be written as the QHA convolution \cite[Lemma 4.7]{DDMO24}:
$$T_a=a\ast \Phi.$$

\subsection{Involutions}
For each $w\in\B^n$, let $\tau_w : \B^n\rightarrow \B^n$ be the biholomorphism that interchanges $w$ and $0$. Recall that $\tau_w (0) = w$ and that $\tau_w (w) = 0$. Then $\tau_w$ is an involution, i.e. $\tau_w^{-1}=\tau_w=$.  In the following lemma, we consider the elements of $G$ as elements of the automorphisms group $\mathrm{Aut}(\B_n)$.

The following Lemma follows from \cite[Theorem 1.4]{Z05}. We include its simple proof.

\begin{lemma}\label{lem:tau}
    Let $g\in \G$. Then as elements of $\mathrm{Aut}(\B_n)$, we have that
    $$g=\tau_{g\cdot 0}\circ k_g$$ 
    for some $k_g\in K$.
\end{lemma}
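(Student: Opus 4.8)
The plan is to combine the transitivity of the $G$-action with the standard classification of the origin-fixing automorphisms of $\B^n$. Write $w=g\cdot 0$ and consider the composition $\tau_w\circ g$, regarded as an element of $\mathrm{Aut}(\B^n)$. The first step is to record that this composition fixes the origin: since $g\cdot 0=w$ and $\tau_w$ interchanges $w$ and $0$, we have
$$(\tau_w\circ g)\cdot 0=\tau_w\big(g\cdot 0\big)=\tau_w(w)=0.$$

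The key input is then the fact that every automorphism of $\B^n$ fixing the origin is a unitary rotation $z\mapsto Az$ with $A\in\U(n)$; this is precisely the content of \cite[Theorem 1.4]{Z05}. Applying it to $\tau_w\circ g$ yields a unitary rotation, which under our identification is realized by an element $k_g\in K$, so that $\tau_w\circ g=k_g$ as automorphisms. I expect this invocation to be the only substantive point of the argument; the remaining steps are formal manipulations with the involution $\tau_w$.

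To finish, I would use that $\tau_w$ is an involution ($\tau_w^{-1}=\tau_w$) and compose the previous identity on the left by $\tau_w$:
$$g=\tau_w\circ(\tau_w\circ g)=\tau_w\circ k_g=\tau_{g\cdot 0}\circ k_g,$$
which is exactly the claimed factorization. The one bookkeeping check worth noting is that the rotation produced by \cite[Theorem 1.4]{Z05} genuinely comes from $K$: by \eqref{def:Action} the embedded subgroup $K=\U(n)$ acts through $\widetilde A\cdot z=(\det A)\,Az$, so the image of $K$ in $\mathrm{Aut}(\B^n)$ is exactly the full set of unitary rotations, and every such rotation—in particular $\tau_w\circ g$—arises from some $k_g\in K$. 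Thus no obstruction appears at this stage, and the lemma follows.
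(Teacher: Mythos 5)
Your proof is correct and takes essentially the same route as the paper's: show that $\tau_{g\cdot 0}\circ g$ fixes the origin, identify it via \cite[Theorem 1.4]{Z05} with an element $k_g\in K$, and compose with the involution $\tau_{g\cdot 0}$ to conclude $g=\tau_{g\cdot 0}\circ k_g$. Your extra bookkeeping check that the embedded copy of $K$ (acting by $\widetilde{A}\cdot z=(\det A)Az$) realizes every unitary rotation is a detail the paper leaves implicit, but it does not change the argument.
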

\begin{proof}
Since
$(\tau_{g\cdot 0}\circ g)(0)=\act{g\cdot 0}{g\cdot 0} = 0$, we have that $\tau_{g\cdot 0}\circ g=k_g$ for some $k_g\in K$. Then it follows that
$g=\tau_{g\cdot 0}\circ k_g$.
\end{proof}

\begin{lemma}\label{lem:absolute_value}
    Let $g,h\in G$. Then $|h^{-1}g|=|\act{h\cdot 0}{g\cdot 0}|$
\end{lemma}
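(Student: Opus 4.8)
The plan is to unpack the notation $|h^{-1}g|$ as the Euclidean norm $|(h^{-1}g)\cdot 0|$ of the image of the origin under the group element $h^{-1}g$, and then reduce the two-variable statement to the one-variable geometry of the involutions $\tau_w$ via Lemma \ref{lem:tau}. Since the action \eqref{def:Action} is a genuine left action of $\G$ on $\B^n$, composition gives $(h^{-1}g)\cdot 0 = h^{-1}\cdot(g\cdot 0)$, so it suffices to show that applying $h^{-1}$ to the point $g\cdot 0$ changes its norm in exactly the same way that the single involution $\tau_{h\cdot 0}$ does.

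First I would invoke Lemma \ref{lem:tau} to write $h = \tau_{h\cdot 0}\circ k_h$ with $k_h \in K$; inverting and using that $\tau_{h\cdot 0}$ is an involution gives $h^{-1} = k_h^{-1}\circ \tau_{h\cdot 0}$ as automorphisms of $\B^n$. Applying this to $g\cdot 0$ yields
$$h^{-1}\cdot(g\cdot 0) = k_h^{-1}\big(\act{h\cdot 0}{g\cdot 0}\big),$$
so the entire lemma collapses to the claim that $k_h^{-1}$, being an element of $K$, preserves the Euclidean norm on $\B^n$.

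The one genuine computation — and the step I expect to be the only real (though minor) obstacle — is verifying this norm-invariance of the $K$-action. Writing $k=\tilde A$ for $A\in \U(n)$ and using the definition \eqref{def:Action} together with the embedding of $K$ into $\G$, one finds $\tilde A\cdot z = (\det A)\,Az$. Since $A$ is unitary and $|\det A|=1$, the map $z\mapsto(\det A)Az$ is an isometry of $\C^n$, whence $|\tilde A\cdot z|=|z|$ for every $z$. Applying this with $k=k_h^{-1}$ gives $\big|k_h^{-1}(\act{h\cdot 0}{g\cdot 0})\big| = |\act{h\cdot 0}{g\cdot 0}|$, and chaining the identities above produces
$$|h^{-1}g| = |h^{-1}\cdot(g\cdot 0)| = |\act{h\cdot 0}{g\cdot 0}|,$$
as desired.
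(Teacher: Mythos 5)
Your proof is correct and follows essentially the same route as the paper: both rest on Lemma \ref{lem:tau} together with the rotation invariance of $|\cdot|$ under the $K$-action. The only (harmless) difference is that you decompose just $h$ and handle $g$ via the left-action identity $(h^{-1}g)\cdot 0 = h^{-1}\cdot(g\cdot 0)$, whereas the paper decomposes both $g$ and $h$; you also spell out the computation $\tilde A\cdot z=(\det A)Az$ that the paper leaves implicit.
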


\begin{proof}
    By Lemma \ref{lem:tau}, we have $g=\tau_{g\cdot 0}\circ k_g$ and $h=\tau_{h\cdot 0}\circ k_h$ for some $k_g,k_h\in K$. Then
    \begin{align*}
        |h^{-1}g|&=|(k_h\circ \tau_{h\cdot 0}\circ \tau_{g\cdot 0}\circ k_g)(0)|\\
        &=|(k_h\circ\tau_{h\cdot 0})(\tau_{g\cdot 0}(0))|\\
        &=|k_h\cdot\act{h\cdot 0}{g\cdot 0}|\\
        &=|\act{h\cdot 0}{g\cdot 0}|
    \end{align*}
    as the absolute value $|\cdot|$ is rotation invariant.
\end{proof}


\section{Weakly localized operators and the Toeplitz algebra}\label{sec:wloc}

In \cite{IMW13}, the authors introduce the algebra of $\beta$-weakly localized operators $\wloc$. They prove that $\wloc$ is a selfadjoint subalgebra of $\bdd$ containing the Toeplitz operators. Then in \cite{X15}, Xia proved that each weakly localized algebra is contained in the norm closure of the set of all Toeplitz operators with bounded symbols. This implies the well-known result of Xia "Toeplitz operators are norm dense in the Toeplitz algbera".  In this section, we use QHA notions to clarify and simplify Xia's proof. The main advantage of our methods is that we bypass the use of separated sets. However, the core idea of the proof is essentially the same as Xia's.

For $r\in[0,1)$, and $z\in \B^n$, we define the set $\Dr(z)$ by 
$$\Dr(z)=\{w\in\B^n\mid |\act{w}{z}|<r\}.$$

\begin{definition}
    For $S\in \bdd$, $\beta\in (0,1)$ and for $r\in [0,1)$, we define 
    $$\I{S}(r):=\sup_{z\in \B^n} \int_{\Dr(z)^\mathsf{c}} |\ip{SK_z}{K_w}|\Big(\frac{1-|z|^2}{1-|w|^2}\Big)^{\beta} \ dw $$
    An operator $S$ is $\beta$-weakly localized if the following conditions are satisfied:
    \begin{enumerate}
        \item $\I{S}(0)<\infty$
        \item $\I{S^*}(0)<\infty$
        \item $\lim_{r\to 1^-}\I{S}(r)=0$
        \item $\lim_{r\to 1^-}\I{S^*}(r)=0$.
    \end{enumerate}
\end{definition}
We denote the algebra of $\beta$-weakly localized operators by $\wloc$.

Given an operator $S\in \bdd$ and $O\subset G$, we formally define $S_O$ by the weak-integral
\begin{align}\label{eq:SO}
    S_O=\int_{G\times G} \chi_{O}(g) \ip{\actopL{h^{-1}}{S}1}{\pi(g)1}\  \actopL{h}{(\pi(g)1)\otimes 1} \ d\mu_{G\times G}(g,h).
\end{align}
Note that the above integral w.r.t. the product measure may not exist. 

For $r\in(0,1)$, we define $$\Br=\{g\in G\mid |g\cdot0|<r \}.$$

\subsection{The operator \texorpdfstring{$S_{\Br}$}{Br}}
Here, we prove that for any $S\in \bdd$, the operator $S_{\Br}$ exists and can be approximated by Toeplitz operators. 
In the following lemma, we prove that $S_O$ exists when $O$ has finite Haar measure.
\begin{lemma}\label{lem:finitemeasure}
    Let $S\in \bdd$ and let $O\subset G$ s.t. $\mu_G(O)<\infty$. Then $S_{O}$ exists as a bounded operator. Moreover, we can express $S_{O}$ as
    $$S_{O}=\int_{O} a_{S,g} \ast \Phi_g \haar{g}.$$
    where $a_{S,g}\in \bddfG$ is given by
    $$a_{S,g}(h)=\ip{\actopL{h^{-1}}{S}1}{\pi(g)1},\ \ h\in G$$
    and $\Phi_g=\pi(g)\otimes 1$, for $g\in G$.
\end{lemma}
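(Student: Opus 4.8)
The plan is to recognize the double integral \eqref{eq:SO} defining $S_O$ as an iterated integral whose inner integration in $h$ is precisely the QHA convolution $a_{S,g}\ast\Phi_g$. Writing out $\actopL{h}{(\pi(g)1)\otimes 1}$ with the unitarity of $\pi$ gives $\actopL{h}{(\pi(g)1)\otimes 1}=(\pi(hg)1)\otimes(\pi(h)1)$, so that the $h$-integral of the integrand in \eqref{eq:SO} is exactly $\int_G a_{S,g}(h)\,\actopL{h}{\Phi_g}\haar{h}=a_{S,g}\ast\Phi_g$. Thus the strategy has three parts: (i) show each inner convolution $a_{S,g}\ast\Phi_g$ exists as a bounded operator with a bound independent of $g$; (ii) conclude that the outer integral over the finite-measure set $O$ converges; and (iii) justify the interchange of the two integrations by Fubini's theorem, thereby identifying the result with $S_O$.

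For step (i), I would first bound the symbol: since $\pi$ is unitary and $\|1\|=1$, we have $\|\actopL{h^{-1}}{S}1\|=\|\pi(h^{-1})S\pi(h^{-1})^*1\|\le\|S\|$ and $\|\pi(g)1\|=1$, so $|a_{S,g}(h)|\le\|S\|$ for all $g,h$; hence $a_{S,g}\in\bddfG$ with $\|a_{S,g}\|_\infty\le\|S\|$. The operator $\Phi_g=(\pi(g)1)\otimes 1$ is rank one, hence trace class, with $\|\Phi_g\|_1=\|\pi(g)1\|\,\|1\|=1$. The QHA-Young inequality for an $\bddfG$ symbol convolved with a trace-class operator then gives $a_{S,g}\ast\Phi_g\in\bdd$ with $\|a_{S,g}\ast\Phi_g\|\le\|a_{S,g}\|_\infty\|\Phi_g\|_1\le\|S\|$, a bound uniform in $g$. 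For step (ii), since this norm bound is uniform and $\mu_G(O)<\infty$, the outer weak integral $\int_O a_{S,g}\ast\Phi_g\haar{g}$ converges and defines a bounded operator of norm at most $\|S\|\,\mu_G(O)$.

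The main work is step (iii). Testing against $f_1,f_2\in\Berg$, everything reduces to the scalar double integral of $\chi_O(g)\,a_{S,g}(h)\,\ip{f_1}{\pi(h)1}\ip{\pi(hg)1}{f_2}$ over $G\times G$, and Fubini requires absolute integrability. Using $|a_{S,g}(h)|\le\|S\|$ and Cauchy--Schwarz in $h$, the inner integral is controlled by $\big(\int_G|\ip{f_1}{\pi(h)1}|^2\haar{h}\big)^{1/2}\big(\int_G|\ip{\pi(hg)1}{f_2}|^2\haar{h}\big)^{1/2}$. Here the Schur orthogonality relations (with $d_\pi=1$ and $\|1\|=1$) identify the first factor with $\|f_1\|$; for the second factor I would use that $G$ is unimodular to perform the translation $h\mapsto hg$ and again invoke Schur orthogonality, obtaining $\|f_2\|$ independently of $g$. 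Integrating the resulting uniform bound $\|S\|\,\|f_1\|\,\|f_2\|$ over the finite-measure set $O$ yields absolute convergence, so Fubini applies and the iterated integral equals $\ip{S_O f_1}{f_2}$. The anticipated obstacle is precisely this uniform-in-$g$ control of the second matrix-coefficient integral; this is where the unimodularity of $G$ and the square-integrability of $\pi$ are essential.
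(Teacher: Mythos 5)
Your proof is correct and follows essentially the same route as the paper's: a uniform bound $|a_{S,g}(h)|\le\|S\|$, Cauchy--Schwarz in $h$ combined with the Schur orthogonality relations (with $d_\pi=1$) to obtain the absolute bound $\|S\|\,\mu_G(O)\,\|f_1\|\,\|f_2\|$, and then Fubini to identify the iterated integral with $S_O$. The only cosmetic difference is that you eliminate the $g$-dependence in the second matrix-coefficient factor by the right translation $h\mapsto hg$ (using unimodularity of $G$), whereas the paper absorbs $\pi(g)1$ into the matrix coefficient $\pi_{\pi(g)1,f_2}$ and applies Schur orthogonality directly with $\|\pi(g)1\|=1$.
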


\begin{proof}
    Note that for $f_1,f_2\in\Berg$.
    \begin{align*}
        \int_O\int_G & |\ip{\actopL{h^{-1}}{S}1}{\pi(g)1}\  \ip{\actopL{h}{(\pi(g)1)\otimes 1}f_1}{f_2}| \haar{h}\haar{g}\\
        &\leq \|S\| \int_O\int_G |\ip{\actopL{h}{(\pi(g)1)\otimes 1}f_1}{f_2}| \haar{h}\haar{g}\\
        &= \|S\| \int_O\int_G |\ip{\pi(h^{-1})f_1}{1}\ip{\pi(g)1}{\pi(h^{-1})f_2}| \haar{h}\haar{g}\\
        &= \|S\| \int_O\int_G |\ip{\pi(h)f_1}{1}\ip{\pi(g)1}{\pi(h)f_2}| \haar{h}\haar{g}\\
        &\leq \|S\| \int_O \|\pi_{1,f_1}\|_2 \|\pi_{\pi(g)1,f_2}\|_2 \haar{g}\\
        &= \|S\|\int_O \|1\|\|f_1\|\|\pi(g)1\|\|f_2\|  \haar{g} \\
        &\hspace{4cm}\text{(by Schur-orthogonality relations)}\\
        &=\|S\|\mu_G(O) \|f_1\|\|f_2\| .
    \end{align*}
    Hence, by Fubini's theorem, $S_O$ is bounded with $\|S_O\|\leq \|S\| \mu_G(O)$. 
\end{proof}

Now we make the observation, that $S_{\Br}$ is an integral of QHA convolutions. This is indeed the fact that leads to our concise proof. 

\begin{proposition}
    Let $S\in \bdd$ and $r\in[0,1)$. Then $S_{\Br}$ exists as a bounded operator. Moreover, we can express $S_{\Br}$ as
    $$S_{\Br}=\int_{|z|<r} a_{S,z} \ast \Phi_z \inv{z}.$$
    where $a_{S,z}\in \bddfG$ is given by
    $$a_{S,z}(h)=\ip{\actopL{h^{-1}}{S}1}{k_{z}},\ \ h\in G$$
    and $\Phi_z=k_{z}\otimes 1$, for $z\in \B^n$.
\end{proposition}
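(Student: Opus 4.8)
The plan is to specialize Lemma \ref{lem:finitemeasure} to the set $O=\Br$ and then push the resulting group integral down to the ball. First I would check that $\Br$ has finite Haar measure. Since $g\in\Br$ exactly when $|g\cdot 0|<r$, the function $\chi_{\Br}$ is the lift of $\chi_{\{|z|<r\}}$, so the integral formula \eqref{eq:integral} gives
$$\mu_G(\Br)=\int_G\chi_{\Br}(g)\haar{g}=\int_{|z|<r}\inv{z},$$
which is finite because $\{|z|<r\}$ is relatively compact in $\B^n$ and the weight $(1-|z|^2)^{-(n+1)}$ is bounded there. Hence Lemma \ref{lem:finitemeasure} applies and yields at once both the existence of $S_{\Br}$ as a bounded operator and the representation $S_{\Br}=\int_{\Br}a_{S,g}\ast\Phi_g\haar{g}$, where $a_{S,g}(h)=\ip{\actopL{h^{-1}}{S}1}{\pi(g)1}$ and $\Phi_g=(\pi(g)1)\otimes 1$.

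The key observation, and the heart of the argument, is that although $a_{S,g}$ and $\Phi_g$ each depend on $g$, their convolution depends only on $g\cdot 0$. By \eqref{eq:cocyclerep} and \eqref{eq:normalizedrep} I would write $\pi(g)1=c(g)\,k_{g\cdot 0}$ with $c(g)=\overline{j(g,0)}/|j(g,0)|$ unimodular. Feeding this into the definitions, the factor $c(g)$ pulls out of the rank-one operator, giving $\Phi_g=c(g)\,\Phi_{g\cdot 0}$ with $\Phi_{g\cdot 0}=k_{g\cdot 0}\otimes 1$, while its conjugate pulls out of the matrix coefficient, giving $a_{S,g}=\overline{c(g)}\,a_{S,g\cdot 0}$ with $a_{S,z}(h)=\ip{\actopL{h^{-1}}{S}1}{k_z}$. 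By the bilinearity of the QHA convolution the factors $c(g)$ and $\overline{c(g)}$ cancel, so that $a_{S,g}\ast\Phi_g=a_{S,g\cdot 0}\ast\Phi_{g\cdot 0}$ depends only on $z=g\cdot 0$. The membership $a_{S,z}\in\bddfG$ follows immediately from the estimate $|a_{S,z}(h)|\le\|S\|$, since $\actopL{h^{-1}}{S}$ is a unitary conjugate of $S$ and $\|1\|=\|k_z\|=1$.

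Finally I would convert the group integral into a ball integral. Setting $F(z)=a_{S,z}\ast\Phi_z$, the integrand $g\mapsto\chi_{\Br}(g)\,F(g\cdot 0)$ is the lift of the right-$K$-invariant datum $z\mapsto\chi_{\{|z|<r\}}(z)\,F(z)$. Testing against $f_1,f_2\in\Berg$ and applying \eqref{eq:integral} to the scalar function $g\mapsto\chi_{\Br}(g)\ip{F(g\cdot 0)f_1}{f_2}$ then yields
$$\ip{S_{\Br}f_1}{f_2}=\int_{|z|<r}\ip{(a_{S,z}\ast\Phi_z)f_1}{f_2}\inv{z},$$
which is precisely the asserted identity $S_{\Br}=\int_{|z|<r}a_{S,z}\ast\Phi_z\inv{z}$ read in the weak sense.

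I expect the only genuinely delicate point to be the cancellation of the unimodular cocycle phase $c(g)$: this is exactly what makes the integrand a well-defined function of $z\in\B^n$ rather than of $g\in G$, and hence what allows \eqref{eq:integral} to be invoked. Once this is in place, the existence statement is immediate from Lemma \ref{lem:finitemeasure} and the finiteness of $\mu_G(\Br)$, and the change of variables is routine, provided one is careful to interpret all operator-valued integrals weakly so that the scalar formula \eqref{eq:integral} can be used.
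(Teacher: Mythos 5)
Your proposal is correct and follows essentially the same route as the paper: existence via Lemma \ref{lem:finitemeasure} and the finiteness of $\mu_G(\Br)$, then the key cancellation $a_{S,g}\ast\Phi_g=a_{S,g\cdot 0}\ast\Phi_{g\cdot 0}$ (which the paper carries out by writing $\pi(g)1=\overline{j(g,0)}K_{g\cdot 0}$ and absorbing $|j(g,0)|^2$, exactly your unimodular-phase argument), and finally the passage from the group integral to the ball integral via \eqref{eq:integral}. Your explicit factoring of the phase $c(g)$ and the remark on weak interpretation are just slightly more detailed presentations of the same steps.
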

\begin{proof}
    First, note that $S_{\Br}\in \bdd$ by Lemma \ref{lem:finitemeasure}, as
\begin{align*}
    \mu_G(\Br)&=\int_{G}\chi_{\Br}(g) 1\haar{g}\\
    &= \int_{|z|<r} \frac{1}{(1-|z|^2)^{n+1}} \ dz \\
    &<\infty.
\end{align*}
We start with the warning that the pair of functions $a_{S,g}$ and $a_{S,g\cdot 0}$, and the pair of operators $\Phi_g$ and $\Phi_{g\cdot 0}$, are not interchangeable. However, by \ref{eq:cocyclerep}, we have that
\begin{align*}
    a_{S,g}\ast \Phi_g&=  \int_G |j(g,0)|^2 \ip{\actopL{h^{-1}}{S}1}{K_{g\cdot 0}}\  \actopL{h}{(K_{g\cdot 0})\otimes 1} \haar{h}\\
    &= \int_G \ip{\actopL{h^{-1}}{S}1}{k_{g\cdot 0}}\  \actopL{h}{(k_{g\cdot 0})\otimes 1} \haar{h}\\
    &= a_{S,g\cdot 0}\ast \Phi_{g\cdot 0}.
\end{align*}

Hence, it follows that
\begin{align*}
    S_{\Br}
    &=\int_{\Br} a_{S,g\cdot 0} \ast \Phi_{g\cdot 0} \haar{g}\\
    &=\int_{|z|<r} \int_G \ip{\actopL{h^{-1}}{S}1}{k_{z}}\  \actopL{h}{k_{z}\otimes 1} \haar{h} \inv{z}\ \ \text{(by \ref{eq:integral})}\\
    &= \int_{|z|<r} \int_G a_{S,z}(h) \actopL{h}{\Phi_z} \haar{h} \inv{z}\\
    &=\int_{|z|<r} a_{S,z} \ast \Phi_z \inv{z}.\qedhere
\end{align*}
\end{proof}

In \cite{BC94}, Berger and Coburn prove that the set of all Toeplitz operators with symbols in $\Lone$ forms a dense subset of $\traceop$. This can also be viewed as a consequence of the QHA Wiener's Tauberian theorem discussed in \cite{DDMO24}. We use this fact in the following lemma.

\begin{lemma}\label{lem:convol_traceclass}
    We have the following inclusion
    $$\bddfG\ast \traceop\subset \overline{\{T_a\mid a\in \bddf\}}.$$
\end{lemma}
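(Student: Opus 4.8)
The plan is to reduce the statement, via the Berger--Coburn density theorem and the continuity of the operator convolution, to verifying that the convolution of $\psi\in\bddfG$ with a single Toeplitz operator is again a Toeplitz operator with bounded symbol; this in turn follows from a mixed associativity identity for QHA convolutions.

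Fix $\psi\in\bddfG$ and $S\in\traceop$. By the Berger--Coburn theorem recalled above, there exist $a_m\in\Lone$ with $\|S-T_{a_m}\|_1\to 0$. By the QHA-Young inequality (item (2)), convolution by $\psi$ is continuous from $\traceop$ into $\bdd$, so
$$\|\psi\ast S-\psi\ast T_{a_m}\|=\|\psi\ast(S-T_{a_m})\|\leq\|\psi\|_\infty\|S-T_{a_m}\|_1\longrightarrow 0.$$
Hence it suffices to prove that each $\psi\ast T_{a_m}$ is a Toeplitz operator with bounded symbol, since then $\psi\ast S$ is a norm limit of such operators and lies in $\overline{\{T_a\mid a\in\bddf\}}$.

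To this end, write $T_{a_m}=a_m\ast\Phi$. The crucial step is the associativity identity
$$\psi\ast(a_m\ast\Phi)=(\psi\ast a_m)\ast\Phi,$$
in which $\psi\ast a_m$ denotes the function-function convolution of $\psi\in\bddfG$ with $a_m\in\Lone$. By Young's inequality for these convolutions with $p=\infty$ and $q=1$, we have $\psi\ast a_m\in\bddf$ with $\|\psi\ast a_m\|_\infty\leq\|\psi\|_\infty\|a_m\|_1$. Therefore $(\psi\ast a_m)\ast\Phi=T_{\psi\ast a_m}$ is a Toeplitz operator with bounded symbol, and $\psi\ast T_{a_m}=T_{\psi\ast a_m}$, as needed.

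The main obstacle is the rigorous justification of the associativity identity. Unwinding the definitions, one has $a_m\ast\Phi=\int_G a_m(h\cdot 0)\,\actopL{h}{\Phi}\haar{h}$, and applying the operator translation $\ActopL{g}$ together with the homomorphism property $\ActopL{g}\ActopL{h}=\ActopL{gh}$ and the substitution $h\mapsto g^{-1}h$ (legitimate since $G$ is unimodular and $\mu_G$ is invariant) leads, after an application of Fubini's theorem, to the integrand $(\psi\ast a_m)(h\cdot 0)\,\actopL{h}{\Phi}$, which is exactly $(\psi\ast a_m)\ast\Phi$. The Fubini step is justified by the absolute bound $\|\psi\|_\infty\|a_m\|_1\|\Phi\|_1$ furnished by the Young estimates. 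The only place where the noncommutativity of $G$ demands genuine care is in tracking this change of variables; everything else is a routine rearrangement.
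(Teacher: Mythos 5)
Your proof is correct and takes essentially the same route as the paper: approximate $S$ in trace norm by Toeplitz operators $T_{a_m}$ with $a_m\in\Lone$ (Berger--Coburn), use the QHA-Young inequality $\|\psi\ast(S-T_{a_m})\|\leq\|\psi\|_\infty\|S-T_{a_m}\|_1$ to pass the approximation through the convolution, and identify $\psi\ast T_{a_m}=(\psi\ast a_m)\ast\Phi=T_{\psi\ast a_m}$ with $\psi\ast a_m\in\bddf$ by Young's inequality. The only difference is cosmetic: you spell out a justification of the associativity identity (which the paper asserts without proof) and you correctly make explicit that the Berger--Coburn approximation is in trace norm, a point the paper's notation leaves implicit.
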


\begin{proof}
    Let $\psi \in \bddfG$, $S\in \traceop$ and let $\epsilon>0$. Since Toeplitz operators are dense in $\traceop$, there is $a\in \Lone$ s.t. 
    $$\|T_a-S\|<\frac{\epsilon}{\|\psi\|_\infty+1}.$$
    Then
    $$\|\psi\ast T_a-\psi\ast S\|=\|\psi\ast (T_a-S)\|\leq \|\psi\|_\infty\|T_a-S\|<\epsilon.$$
    Also,
    $$\psi\ast T_a=\psi\ast(a\ast \Phi)=(\psi\ast a)\ast \Phi=T_{\psi\ast a}.$$
    Finally, we note that $\psi\ast a \in \bddf$. This completes the proof. 
\end{proof}

\begin{proposition}\label{prop:SBr_Toeplitz}
    Let $S\in \bdd$ and let $r\in(0,1)$. Then $S_{\Br}$ exists and $S_{\Br}\in \overline{\{T_a\mid a\in \bddf\}}$. 
\end{proposition}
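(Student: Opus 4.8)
The plan is to combine the integral representation
$$S_{\Br}=\int_{|z|<r} a_{S,z} \ast \Phi_z \inv{z}$$
from the preceding proposition with Lemma \ref{lem:convol_traceclass}. Existence of $S_{\Br}$ as a bounded operator is already supplied by that proposition (which rests on Lemma \ref{lem:finitemeasure} together with $\mu_G(\Br)<\infty$), so the only genuinely new content is the membership $S_{\Br}\in M$, where $M:=\overline{\{T_a\mid a\in\bddf\}}$.

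First I would show that each integrand belongs to $M$. For fixed $z$ the operator $\Phi_z=k_z\otimes 1$ has rank one, hence lies in $\traceop$, with $\|\Phi_z\|_1=\|k_z\|\,\|1\|=1$, while $a_{S,z}\in\bddfG$. Thus $a_{S,z}\ast\Phi_z\in\bddfG\ast\traceop$, and Lemma \ref{lem:convol_traceclass} puts it inside $M$. The QHA-Young inequality also gives the uniform bound $\|a_{S,z}\ast\Phi_z\|\le\|a_{S,z}\|_\infty\|\Phi_z\|_1\le\|S\|$, using that $\pi$ is unitary and $\|k_z\|=\|1\|=1$, so the integral converges absolutely over the finite-measure set $\{|z|<r\}$.

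Next I would note that $M$ is a norm-closed linear subspace of $\bdd$, since $a\mapsto T_a$ is linear on $\bddf$. The operator $S_{\Br}$ is the integral of the $M$-valued map $z\mapsto a_{S,z}\ast\Phi_z$, so to finish it suffices to know that integration does not leave $M$. I would do this by upgrading the defining weak integral to a Bochner integral: the estimates $\|a_{S,z}-a_{S,z'}\|_\infty\le\|S\|\,\|k_z-k_{z'}\|$ and $\|\Phi_z-\Phi_{z'}\|_1=\|k_z-k_{z'}\|$, together with the continuity of $z\mapsto k_z$, make the integrand norm-continuous, hence strongly measurable; combined with the uniform bound it is Bochner integrable over $\{|z|<r\}$. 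A Bochner integral of a function taking values in a closed subspace again lies in that subspace, so the Bochner integral belongs to $M$, and testing against every rank-one functional $\ip{\cdot\,f_1}{f_2}$ shows that it coincides with $S_{\Br}$.

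The step I expect to be the crux is precisely this passage from the weak integral to the Bochner integral and the conclusion that the integral stays in $M$; concretely one may instead approximate by Riemann sums, which are finite linear combinations of elements of $M$ and converge in operator norm, and then invoke the closedness of $M$. The remaining ingredients — the norm-continuity of $z\mapsto a_{S,z}$ in the supremum norm and of $z\mapsto\Phi_z$ in trace norm — are routine consequences of the continuity of $z\mapsto k_z$ and the QHA-Young estimates, so I anticipate no genuine difficulty there.
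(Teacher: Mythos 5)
Your proposal is correct and follows essentially the same route as the paper: each $a_{S,z}\ast\Phi_z$ lies in $\overline{\{T_a\mid a\in\bddf\}}$ by Lemma \ref{lem:convol_traceclass}, the integrand $z\mapsto a_{S,z}\ast\Phi_z$ is norm-continuous, and the integral stays in the closed subspace via Riemann-sum approximation (your Bochner-integral formulation is just a repackaging of this, and your fallback via Riemann sums is exactly the paper's final step). The only cosmetic difference is that you prove continuity directly on $\B^n$ through $z\mapsto k_z$, whereas the paper lifts to $G$ and invokes strong continuity of $\pi$; both are equally valid.
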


\begin{proof}
    Since $a_{S,z}\ast \Phi_z\in \bddfG\ast \traceop$, we have that 
    $$a_{S,z}\ast \Phi_z\in \overline{\{T_a\mid a\in \bddf\}}$$ by Lemma \ref{lem:convol_traceclass}.
    Now we claim that the map $\B^n\to \bdd,\ z\mapsto a_{S,z}\ast \Phi_z$ is continuous w.r.t. the operator norm. Since $\B^n$ is the homogeneous space $G/K$, and because $a_{S,g}\ast \Phi_{g}=a_{S,g\cdot 0}\ast \Phi_{g\cdot 0}$ it suffices to prove that $g\mapsto a_{S,g}\ast \Phi_{g}$ is continuous from $G\to \bdd$ w.r.t. the operator norm.
    
    Note that for $g,h\in \B^n$,
    \begin{align*}
        \|a_{S,g}\ast \Phi_g-a_{S,h}\ast \Phi_h\|
        &= \|a_{S,g}\ast \Phi_g-a_{S,h}\ast \Phi_g\|
        + \|a_{S,h}\ast \Phi_g-a_{S,h}\ast \Phi_h\|\\
        &=\|(a_{S,g}-a_{S,h})\ast \Phi_g\|
        +\|a_{S,h}\ast (\Phi_g-\Phi_h)\|\\
        &\leq \|a_{S,g}-a_{S,h}\|_\infty\|\Phi_g\|_1 +\|a_{S,h}\|_\infty \|\Phi_g-\Phi_h\|_1\\
        &\leq 2\|S\|\|\pi(g)1-\pi(h)1\|.
    \end{align*}
    By the strong continuity of the representation the required continuity holds. Then as $S_{\Br}$ is a limit of the Reimann sums of the operators of the form $\frac{1}{(1-|z|^2)^{n+1}}a_{S,z}\ast \Phi_z$, we have that
     $S_{\Br}$ is also in $\overline{\{T_a\mid a\in \bddf\}}$. 
\end{proof}

Lastly, we write $S_{\Br}$ as a double integrals over the ball:
\begin{lemma}\label{lem:SBrintegral}
    Let $S\in \bdd$ and $r\in[0,1)$. Then
    $$S_{\Br}=\int_{\B^n}\int_{\Dr(z)} \ip{SK_z}{K_w}  (K_w\otimes K_z) \ dw\ dz.$$
\end{lemma}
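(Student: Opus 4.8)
The plan is to begin from the representation of $S_{\Br}$ as an integral of QHA convolutions proved in the preceding Proposition,
\[
S_{\Br}=\int_{|u|<r} a_{S,u}\ast \Phi_u\inv{u},
\]
where I have renamed the outer variable to $u$ so as to reserve $z,w$ for the statement. The idea is to unfold each convolution into a group integral, rewrite the integrand entirely in terms of reproducing kernels, and then convert the resulting integral over $G$ into the announced double integral over $\B^n$ by a change of variables. First I would unfold the convolution using $\actopL{h}{\Phi_u}=(\pi(h)k_u)\otimes(\pi(h)\one)$ and $a_{S,u}(h)=\ip{S\pi(h)\one}{\pi(h)k_u}$, where $\actopL{h^{-1}}{S}=\pi(h)^*S\pi(h)$. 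The key tool is the transformation law
\[
\pi(h)K_v=\overline{j(h,v)}K_{h\cdot v},
\]
which extends \eqref{eq:cocyclerep} from $v=0$ to arbitrary $v$ and follows at once from the definition of $\pi$ together with the reproducing property.

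Writing $z:=h\cdot 0$ and $w:=h\cdot u$, and recalling $k_u=(1-|u|^2)^{(n+1)/2}K_u$, this gives $\pi(h)\one=\overline{j(h,0)}K_z$ and $\pi(h)k_u=(1-|u|^2)^{(n+1)/2}\overline{j(h,u)}K_w$. Substituting and collecting the scalar factors, the integrand of $a_{S,u}\ast\Phi_u$ becomes
\[
|j(h,0)|^2\,|j(h,u)|^2\,(1-|u|^2)^{n+1}\,\ip{SK_z}{K_w}\,(K_w\otimes K_z).
\]
I would then simplify the scalar prefactor: by \eqref{eq:cocyclezero}, $|j(h,0)|^2=(1-|z|^2)^{n+1}$, while the unitarity of $\pi$ applied to $\pi(h)K_u=\overline{j(h,u)}K_{h\cdot u}$ together with $\|K_v\|^2=(1-|v|^2)^{-(n+1)}$ yields $|j(h,u)|^2=(1-|w|^2)^{n+1}/(1-|u|^2)^{n+1}$. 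The three factors collapse to the clean prefactor $(1-|z|^2)^{n+1}(1-|w|^2)^{n+1}$.

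Next comes the change of variables. After interchanging the $u$- and $h$-integrations I would, for fixed $h$, substitute $w=h\cdot u$ in the inner integral. Since $\lambda$ is $G$-invariant we have $\inv{u}=\inv{w}$, and by Lemma \ref{lem:absolute_value} (applied together with $|g\cdot 0|=|g^{-1}\cdot 0|$, which itself follows from Lemma \ref{lem:tau} since $k_g^{-1}\in K$ preserves $|\cdot|$) the constraint $|u|<r$ transforms into $|\act{w}{z}|<r$, i.e. $w\in \Dr(z)$ with $z=h\cdot 0$. The inner integral thereby becomes a function of $z=h\cdot 0$ alone, so \eqref{eq:integral} converts $\int_G(\cdot)\haar{h}$ into $\int_{\B^n}(\cdot)\inv{z}$. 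Finally, absorbing the prefactor via $(1-|z|^2)^{n+1}\inv{z}=dz$ and $(1-|w|^2)^{n+1}\inv{w}=dw$ produces exactly
\[
S_{\Br}=\int_{\B^n}\int_{\Dr(z)}\ip{SK_z}{K_w}\,(K_w\otimes K_z)\ dw\ dz.
\]

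The main obstacle is the bookkeeping of the scalar cocycle factors together with the justification of the two interchanges of integration; the latter I would handle weakly, pairing the operators against $f_1,f_2\in\Berg$ so that every quantity becomes scalar, and invoking the absolute bound already obtained in the proof of Lemma \ref{lem:finitemeasure} (with $O=\Br$) to apply Fubini. The only genuinely new ingredients beyond the cited results are the general kernel transformation law and the identification of the region of integration; both are short, the region being handled entirely through Lemmas \ref{lem:tau} and \ref{lem:absolute_value}, so that the symmetry $|\act{w}{z}|=|\act{z}{w}|$ never has to be quoted separately.
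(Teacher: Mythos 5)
Your proof is correct and reaches the right identity, but by a route that differs from the paper's in two substantive ways, so a comparison is worthwhile. The paper works directly from the definition \eqref{eq:SO}: after Fubini it performs the change of variable $g\mapsto hg$ in the \emph{group}, using invariance of the Haar measure; this makes the integrand depend only on $\pi(g)1$ and $\pi(h)1$, so the only kernel identity needed is the base-point law \eqref{eq:cocyclerep}, after which Lemma \ref{lem:absolute_value} and \eqref{eq:integraldz} finish the computation. You instead start from the convolution representation $S_{\Br}=\int_{|u|<r}a_{S,u}\ast\Phi_u\inv{u}$ and change variables $w=h\cdot u$ on the \emph{ball}, which forces you to control the cocycle away from the origin: you need the general transformation law $\pi(h)K_v=\overline{j(h,v)}K_{h\cdot v}$ (not stated in the paper, but correct, and provable exactly as you indicate via the reproducing property) together with $|j(h,u)|^2=\|K_u\|^2/\|K_{h\cdot u}\|^2$, and your collapse of the scalar prefactors to $(1-|z|^2)^{n+1}(1-|w|^2)^{n+1}$ is right. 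What the paper's group-level substitution buys is brevity: no cocycle at nonzero points ever appears. What your version buys is care at one point the paper passes over silently: in the paper, going from $\chi_{\Br}(h^{-1}g)$ to the condition $w\in\Dr(z)$ uses the symmetry $|\act{z}{w}|=|\act{w}{z}|$ without comment, whereas your derivation of $|u|=|\act{w}{z}|$ from $|q\cdot 0|=|q^{-1}\cdot 0|$ (a consequence of Lemma \ref{lem:tau}) combined with Lemma \ref{lem:absolute_value} lands on exactly the defining condition of $\Dr(z)$, so that symmetry never needs to be quoted. Your Fubini justification, pairing against $f_1,f_2$ and reusing the absolute bound from Lemma \ref{lem:finitemeasure} with $O=\Br$, is adequate and is the same device the paper relies on.
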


\begin{proof}
    We already noted that $S_{\Br}\in \bdd$. Note that
    \begin{align*}
        S_{\Br}&= \int_{\Br}\int_G \ip{\actopL{h^{-1}}{S}1}{\pi(g)1}\  \actopL{h}{\pi(g)1\otimes 1} \haar{h}\haar{g}\\
        &=\int_G\int_G \chi_{\Br}(g) \ip{\actopL{h^{-1}}{S}1}{\pi(g)1}\  \actopL{h}{\pi(g)1\otimes 1} \haar{g}\haar{h}\\
        &\hspace{6cm}\text{(by a Fubini's theorem)}\\
        &=\int_G\int_G \chi_{\Br}(h^{-1}g)\ip{\actopL{h^{-1}}{S}1}{\pi(h^{-1}g)1}\\
        &\hspace{4.5cm}\times \actopL{h}{(\pi(h^{-1}g)1)\otimes 1} \haar{g}\haar{h}\\
        &\hspace{6cm}\text{(by a change of variable)}\\
        &=\int_G\int_G \chi_{\Br}(h^{-1}g) \ip{S\pi(h)1}{\pi(g)1}\  \pi(g)1\otimes \pi(h)1 \haar{g}\haar{h}.
    \end{align*}
    Recall that $\pi(g)1=j(g,0)K_{g\cdot 0}$ by \ref{eq:cocyclerep} and $|h^{-1}g|=|\act{h\cdot 0}{g\cdot 0}|$ by Lemma \ref{lem:absolute_value}. Hence we have
    \begin{align*}
         S_{\Br}
         &= \int_G \int_G \chi_{\Br}(h^{-1}g) \ip{SK_{h\cdot 0}}{K_{g\cdot 0}} ( K_{g\cdot 0}\otimes K_{h\cdot 0}) |j(g,0)|^2\haar{g}\\
         &\hspace{7cm}\times|j(h,0)|^2\haar{h}\\
         &= \int_{\B^n}\int_{\Dr(z)} |\ip{SK_z}{K_w}  \ (K_w\otimes K_z) \ dw\ dz \ \ \text{(by \ref{eq:integraldz})}.\qedhere
    \end{align*}
\end{proof}

\subsection{An integral representation of weakly localized operators}
In \cite{WZ23}, the authors discuss an integral representation of weakly localized operators, which we present here for completeness.
Namely, we prove that a weakly localized operator $S\in \wloc$ has the integral representation $S=S_G$ and $\lim_{r\to 1^-}S_{\Br}=S$ in operator norm. 

The following integral representation holds for any bounded operator.
\begin{lemma}\label{lem:integral_S}
    Let $S\in \bdd$. Then
    $$S=\int_G\int_G \ip{S\pi(h)1}{\pi(g)1}\  \pi(g)1\otimes \pi(h)1 \haar{g}\haar{h}.$$
\end{lemma}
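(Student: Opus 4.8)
The plan is to sandwich $S$ between two copies of the resolution of the identity furnished by the Schur orthogonality relations and to read off the stated kernel from the rank-one calculus. The starting point is the weak reproducing identity
\begin{equation}\label{eq:resolution}
\ip{u}{v}=\int_G \ip{u}{\pi(g)1}\ip{\pi(g)1}{v}\haar{g},\qquad u,v\in\Berg,
\end{equation}
which is precisely the Schur orthogonality relation $\ip{\pi_{u,1}}{\pi_{v,1}}=\frac{1}{d_\pi}\ip{u}{v}\ip{1}{1}$ evaluated with $d_\pi=1$ and $\|1\|=1$. Taking $u=v$ also gives the Parseval bound $\int_G|\ip{u}{\pi(g)1}|^2\haar{g}=\|u\|^2$, which I will use to guarantee integrability.

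First I would fix $f_1,f_2\in\Berg$ and expand the rank-one integrand weakly, using $(\pi(g)1\otimes\pi(h)1)f_1=\ip{f_1}{\pi(h)1}\,\pi(g)1$, to obtain
\[
\ip{(\pi(g)1\otimes\pi(h)1)f_1}{f_2}=\ip{f_1}{\pi(h)1}\ip{\pi(g)1}{f_2}.
\]
Thus the proposed right-hand side, tested against $f_1,f_2$, is the iterated integral of $\ip{S\pi(h)1}{\pi(g)1}\ip{f_1}{\pi(h)1}\ip{\pi(g)1}{f_2}$, integrated first in $g$ and then in $h$, matching the order $\haar{g}\haar{h}$ in the statement. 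Carrying out the inner $g$-integral by \eqref{eq:resolution} with $u=S\pi(h)1$ and $v=f_2$ collapses it to $\ip{S\pi(h)1}{f_2}$; this inner integral is absolutely convergent for every $h$ by Cauchy--Schwarz together with the Parseval bound, since $\int_G|\ip{S\pi(h)1}{\pi(g)1}|^2\haar{g}=\|S\pi(h)1\|^2\le\|S\|^2$.

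Next I would handle the remaining $h$-integral. Rewriting $\ip{S\pi(h)1}{f_2}=\ip{\pi(h)1}{S^*f_2}$ and applying \eqref{eq:resolution} once more with $u=f_1$ and $v=S^*f_2$ gives $\ip{f_1}{S^*f_2}=\ip{Sf_1}{f_2}$; the outer integrand is absolutely integrable because $\int_G|\ip{f_1}{\pi(h)1}\ip{\pi(h)1}{S^*f_2}|\haar{h}\le\|f_1\|\,\|S^*f_2\|$ by Cauchy--Schwarz and Parseval. Since these same estimates bound the full sesquilinear form by $\|S\|\,\|f_1\|\,\|f_2\|$, the right-hand side defines a bounded operator, and as $f_1,f_2$ were arbitrary it must equal $S$.

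The main obstacle to flag is exactly the point already noted after \eqref{eq:SO}: the integrand is generally \emph{not} integrable against the product measure $\mu_G\times\mu_G$, because the matrix coefficients $g\mapsto\ip{u}{\pi(g)1}$ lie in $L^2(G)$ but not in $L^1(G)$, so Fubini is unavailable and the double integral must be read as an iterated weak integral. The care required is therefore to verify absolute convergence of each of the two successive one-variable integrals separately---which the Parseval identity from Schur orthogonality supplies---and to keep the order of integration ($g$ inner, $h$ outer) consistent with the stated formula, so that the two applications of \eqref{eq:resolution} land on the correct variables.
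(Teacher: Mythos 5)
Your proof is correct, and it reaches the paper's identity by a somewhat different route. The paper's argument is operator-algebraic: it writes $S=T_1ST_1$ (since the Toeplitz operator with symbol $1$ is the identity), invokes the QHA convolution formula $T_a=a\ast\Phi$ from \cite{DDMO24} to get $S=(1\ast\Phi)S(1\ast\Phi)$, and then formally composes the two weak integrals and applies the rank-one calculus to $\actopL{h}{\Phi}S\actopL{g}{\Phi}$. You instead bypass the convolution formalism entirely: you derive the resolution of the identity $\ip{u}{v}=\int_G\ip{u}{\pi(g)1}\ip{\pi(g)1}{v}\haar{g}$ directly from the Schur orthogonality relations with $d_\pi=1$ and $\|1\|=1$, and then verify the asserted formula weakly, collapsing first the inner $g$-integral and then the outer $h$-integral. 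The two routes rest on the same underlying fact --- that $\id$ is the continuous superposition $\int_G\pi(g)1\otimes\pi(g)1\haar{g}$ of rank-one operators --- but what your version buys is rigor at exactly the point the paper itself flags after the definition of $S_O$: since matrix coefficients such as $g\mapsto\ip{u}{\pi(g)1}$ lie in $L^2(G)$ but generally not in $L^1(G)$, the double integral cannot be read against the product measure, and the paper's composition of two weak integrals into an iterated double integral is left without justification. Your Cauchy--Schwarz/Parseval estimates supply precisely that justification, fix the iterated reading consistent with the order $\haar{g}\haar{h}$, and yield the bound $\|S_G\|\leq\|S\|\,$ as a byproduct (sharper than what one extracts from the paper's later estimate $\|S_G\|\leq \I{S}(0)\I{S^*}(0)$, though of course only for this lemma). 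What the paper's route buys is narrative economy: the identity appears as a one-line consequence of the same QHA formalism used throughout to manipulate $S_{\Br}$.
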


\begin{proof}
    The identity operator is the Toeplitz operator whose symbol is the constant function one. Hence
    $$S=\id S \id= T_1 S T_1=(1\ast \Phi)S(1\ast \Phi).$$
    Then we have
    \begin{align*}
        S&=\int_G (1\ast \Phi)S\actopL{g}{\Phi} \haar{g}\\
        &= \int_G \int_G \actopL{h}{\Phi}S\actopL{g}{\Phi} \haar{g}\\
        &= \int_G \int_G (\pi(h)1\otimes\pi(h)1)S(\pi(g)1\otimes\pi(g)1) \haar{g}\haar{h}\\
        &= \int_G\int_G \ip{S\pi(h)1}{\pi(g)1}\  \pi(g)1\otimes \pi(h)1 \haar{g}\haar{h}.\qedhere
    \end{align*}
\end{proof}

The following Proposition was discussed in \cite[Theorem 3.3]{WZ23} and we include this only for completeness. We also note that the Lemma 3.4 in \cite{X15}, is also of the same nature. We use the notations, $\Br^\mathsf{c}=G\setminus \Br$ and $\Dr(z)^\mathsf{c}=\B^n\setminus \Dr(z)$. 

\begin{proposition}\label{prop:SBr_convergence}
    Let $S\in\wloc$. Then $S_G$ exists as a bounded operator and we have $S_G=S$. Moreover,
    $\lim_{r\to 1^-} S_{\Br}=S$
    in operator norm. 
\end{proposition}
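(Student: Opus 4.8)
The plan is to establish the single operator-norm estimate
\[
\|S-S_{\Br}\|\le \sqrt{\I{S}(r)\,\I{S^*}(r)}
\]
and then let $r\to 1^-$. To produce the difference $S-S_{\Br}$ as a tail integral, I would feed the representation of $S$ from Lemma \ref{lem:integral_S} through the same change of variables used in the proof of Lemma \ref{lem:SBrintegral} (replacing $\chi_{\Br}$ by $1$, i.e. $\Dr(z)$ by $\B^n$), obtaining
\[
S=\int_{\B^n}\int_{\B^n}\ip{SK_z}{K_w}\,(K_w\otimes K_z)\ dw\ dz .
\]
Subtracting Lemma \ref{lem:SBrintegral} then gives
\[
S-S_{\Br}=\int_{\B^n}\int_{\Dr(z)^{\mathsf{c}}}\ip{SK_z}{K_w}\,(K_w\otimes K_z)\ dw\ dz .
\]
Pairing against $f_1,f_2\in\Berg$, using the convention $u\otimes v:f\mapsto\ip{f}{v}u$ and the reproducing formula $f_1(z)=\ip{f_1}{K_z}$, this becomes
\[
\ip{(S-S_{\Br})f_1}{f_2}=\int_{\B^n}\int_{\Dr(z)^{\mathsf{c}}}\ip{SK_z}{K_w}\,f_1(z)\,\overline{f_2(w)}\ dw\ dz .
\]

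The heart of the argument is a Schur/Cauchy--Schwarz estimate on this bilinear form. I would insert the weight $\big(\tfrac{1-|z|^2}{1-|w|^2}\big)^{\beta/2}$ and its reciprocal, splitting the integrand symmetrically, and apply Cauchy--Schwarz to the double integral. The first resulting factor is
\[
\int_{\B^n}|f_1(z)|^2\Big(\int_{\Dr(z)^{\mathsf{c}}}|\ip{SK_z}{K_w}|\big(\tfrac{1-|z|^2}{1-|w|^2}\big)^{\beta}\ dw\Big)dz\le \I{S}(r)\,\|f_1\|^2 ,
\]
since the inner integral is bounded by $\I{S}(r)$ by definition. For the second factor I would exchange the order of integration; the geometric input here is the symmetry $|\act{z}{w}|=|\act{w}{z}|$ of the pseudohyperbolic distance (so that $w\in\Dr(z)^{\mathsf{c}}$ is equivalent to $z\in\Dr(w)^{\mathsf{c}}$), which follows from Lemma \ref{lem:absolute_value}. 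Combined with $|\ip{SK_z}{K_w}|=|\ip{S^*K_w}{K_z}|$, the inner integral then reads $\int_{\Dr(w)^{\mathsf{c}}}|\ip{S^*K_w}{K_z}|\big(\tfrac{1-|w|^2}{1-|z|^2}\big)^{\beta}dz\le \I{S^*}(r)$, so the second factor is at most $\I{S^*}(r)\,\|f_2\|^2$. Multiplying the two bounds yields the claimed estimate.

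Since $S\in\wloc$, we have $\lim_{r\to1^-}\I{S}(r)=\lim_{r\to1^-}\I{S^*}(r)=0$, so the estimate forces $\|S-S_{\Br}\|\to0$; this is exactly $\lim_{r\to1^-}S_{\Br}=S$ in operator norm. Because each $S_{\Br}$ exists as a bounded operator (Lemma \ref{lem:finitemeasure}) and $\Br\uparrow G$ as $r\uparrow1$, the convergence shows that the improper weak integral defining $S_G$ exists in operator norm and $S_G=\lim_{r\to1^-}S_{\Br}=S$. I expect the main obstacle to be the bookkeeping in the Schur estimate: choosing the weight so that the two factors reproduce \emph{exactly} $\I{S}(r)$ and $\I{S^*}(r)$, and cleanly justifying the identity $\{z:w\in\Dr(z)^{\mathsf{c}}\}=\Dr(w)^{\mathsf{c}}$ that lets the second factor be expressed through $S^*$.
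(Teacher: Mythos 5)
Your proposal is correct and follows essentially the same route as the paper: both recover the tail $S-S_{\Br}$ from Lemmas \ref{lem:integral_S} and \ref{lem:SBrintegral}, estimate it with the same weighted Cauchy--Schwarz/Fubini argument using the weight $\bigl(\tfrac{1-|z|^2}{1-|w|^2}\bigr)^{\beta/2}$, the symmetry $|\act{z}{w}|=|\act{w}{z}|$, and $|\ip{SK_z}{K_w}|=|\ip{K_z}{S^*K_w}|$, and obtain the existence of $S_G$ from the $r=0$ case of the same estimate. Incidentally, your constant $\sqrt{\I{S}(r)\,\I{S^*}(r)}$ tracks the exponents correctly, while the paper's displayed bound $\I{S}(r)\,\I{S^*}(r)$ drops two square roots along the way; both tend to $0$ as $r\to 1^-$, so the conclusion is unaffected.
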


\begin{proof}
    Let $f_1,f_2\in \Berg$. First we prove that $\lim_{r\to 1^-} S_{\Br^\mathsf{c}}=0$ in operator norm. Note that 
    \begin{align*}
        &\int_{D_r(z)^\mathsf{c}} |\ip{SK_z}{K_w}  f_2(w)| \ dw\\
        &= \int_{D_r(z)^\mathsf{c}}  |\ip{SK_z}{K_w}|^{\frac{1}{2}} \Big(\frac{1-|z|^2}{1-|w|^2} \Big)^{\frac{\beta}{2}}   |\ip{SK_z}{K_w}|^{\frac{1}{2}} |f_2(w)|\Big(\frac{1-|w|^2}{1-|z|^2} \Big)^{\frac{\beta}{2}} \ dw\\
        &\leq \Bigg(\int_{D_r(z)^\mathsf{c}}  |\ip{SK_z}{K_w}| \Big(\frac{1-|z|^2}{1-|w|^2} \Big)^{\beta} \ dw\Bigg)^{\frac{1}{2}}\\
        &\hspace{3.5cm}\times \Bigg(\int_{D_r(z)^\mathsf{c}}  |\ip{SK_z}{K_w}| |f_2(w)|^2\Big(\frac{1-|w|^2}{1-|z|^2} \Big)^{\beta} \ dw\Bigg)^{\frac{1}{2}}\\
        &\hspace{6cm} \text{( by Holder's inequality)}\\
        &= \I{S}(r)\Bigg(\int_{D_r(z)^\mathsf{c}}  |\ip{SK_z}{K_w}| |f_2(w)|^2\Big(\frac{1-|w|^2}{1-|z|^2} \Big)^{\beta} \ dw\Bigg)^{\frac{1}{2}}.
    \end{align*}

    Now by Lemma \ref{lem:SBrintegral}, we have
    \begin{align*}
        \int_{\B^n}&\int_{D_r(z)^\mathsf{c}} |\ip{SK_z}{K_w}  \ \ip{(K_w\otimes K_z)f_1}{f_2}| \ dw\ dz \\
        &= \int_{\B^n} |f_1(z)| \int_{D_r(z)^\mathsf{c}} |\ip{SK_z}{K_w}  f_2(w)| \ dw \ dz\\
        &\leq \|f_1\|\Bigg( \int_{\B^n} \bigg(\int_{D_r(z)^\mathsf{c}} |\ip{SK_z}{K_w}  f_2(w)| \ dw \bigg)^2 \ dz\Bigg)^\frac{1}{2}\\
        &\hspace{4cm}\text{(by Holder's inequality)}\\
        &\leq \|f_1\| \I{S}(r) \Bigg( \int_{\B^n} \int_{D_r(z)^\mathsf{c}}  |\ip{SK_z}{K_w}| |f_2(w)|^2\Big(\frac{1-|w|^2}{1-|z|^2} \Big)^{\beta} \ dw \ dz\Bigg)^\frac{1}{2}\\
        &\hspace{7cm}\\
        &= \|f_1\| \I{S}(r) \Bigg( \int_{\B^n} |f_2(w)|^2\int_{D_r(w)^\mathsf{c}}  |\ip{K_z}{S^*K_w}| \Big(\frac{1-|w|^2}{1-|z|^2} \Big)^{\beta} \ dz \ dw\Bigg)^\frac{1}{2}\\
        &\leq \I{S}(r)\I{S^*}(r)\|f_1\| \|f_2\| 
    \end{align*}
    By taking $r=0$, we get that $S_G\in \bdd$ with $\|S_G\|\leq \I{S}(0)\I{S^*}(0)$.     
    Also, we have
    \begin{align*}
        S_{\Br^\mathsf{c}}&=S_G-S_{\Br}\\
        &= \int_{\B^n}\int_{\Dr(z)^\mathsf{c}} \ip{SK_z}{K_w}  (K_w\otimes K_z) \ dw\ dz.
    \end{align*}
     by Lemma \ref{lem:SBrintegral}.
    Therefore we get $\| S_{\Br^\mathsf{c}}\|\leq \I{S}(r)\I{S^*}(r).$
    It follows that
    $\lim_{r\to 1^-} S_{\Br^\mathsf{c}}=0$
    in operator norm.
    Also, by Lemma \ref{lem:integral_S}, we have $S=S_G=S_{\Br}+S_{\Br^\mathsf{c}}$.
    Therefore we have  $\lim_{r\to 1^-} S_{\Br}=S$
    in operator norm. 
    \end{proof}

    We now prove that weakly localized operators are contained in the norm closure of the set of all Toeplitz operators $\toep$.

    \begin{proposition}
        We have the following
        $$\wloc\subset \overline{\toep}.$$
    \end{proposition}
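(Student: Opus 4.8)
The plan is to assemble the two main propositions already in hand, since together they make the inclusion almost immediate. Given an arbitrary $S\in\wloc$, I would first invoke Proposition \ref{prop:SBr_convergence}, which guarantees that $S_{\Br}\to S$ in operator norm as $r\to 1^-$. This is the step where the full strength of weak localization is genuinely used: the conditions $\lim_{r\to 1^-}\I{S}(r)=0$ and $\lim_{r\to 1^-}\I{S^*}(r)=0$ feed into the bound $\|S_{\Br^{\mathsf{c}}}\|\leq \I{S}(r)\I{S^*}(r)$, forcing the tail $S_{\Br^{\mathsf{c}}}$ to vanish in norm. Thus $S$ is exhibited as an operator-norm limit of the truncations $S_{\Br}$.

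Next, for each fixed $r\in(0,1)$, Proposition \ref{prop:SBr_Toeplitz} shows that $S_{\Br}\in\overline{\{T_a\mid a\in\bddf\}}=\overline{\toep}$. Since $\overline{\toep}$ is norm-closed by definition, and $S$ is the operator-norm limit of operators lying in $\overline{\toep}$, it follows at once that $S\in\overline{\toep}$. As $S\in\wloc$ was arbitrary, this yields the desired inclusion $\wloc\subset\overline{\toep}$. The proof is therefore a two-line assembly, with all the substantive work deferred to the cited results.

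Accordingly, the real obstacle lies upstream rather than in this final statement. Establishing in Proposition \ref{prop:SBr_Toeplitz} that each truncation $S_{\Br}$ lands in the Toeplitz closure requires two ingredients: the continuity of the map $z\mapsto a_{S,z}\ast\Phi_z$ in operator norm, which upgrades the integral defining $S_{\Br}$ to an operator-norm limit of Riemann sums of convolutions, and the containment $\bddfG\ast\traceop\subset\overline{\{T_a\mid a\in\bddf\}}$ of Lemma \ref{lem:convol_traceclass}, which itself rests on the Berger--Coburn density of Toeplitz operators in $\traceop$. If I had to flag a single delicate point feeding into the present proposition, it would be ensuring that the Riemann-sum approximation of $S_{\Br}$ converges in operator norm and not merely weakly; but that is exactly what the strong-continuity estimate in Proposition \ref{prop:SBr_Toeplitz} secures, so here I simply quote it.
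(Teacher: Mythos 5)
Your proof is correct and is exactly the paper's argument: the paper also deduces the inclusion immediately by combining Proposition \ref{prop:SBr_Toeplitz} (each truncation $S_{\Br}$ lies in $\overline{\{T_a\mid a\in \bddf\}}$) with Proposition \ref{prop:SBr_convergence} ($S_{\Br}\to S$ in operator norm), using that the closure is norm-closed.
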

    \begin{proof}
        This follows immediately by Proposition \ref{prop:SBr_Toeplitz} and Proposition \ref{prop:SBr_convergence}.
    \end{proof}

    Now we reprove Xia's theorem. We also add the additional statement
    $$\toepalg=\overline{\bddfG\ast \traceop}.$$
    
    \begin{theorem}\label{theo:wloc_Toeplitz}
        We have the following:
        $$\toepalg=\overline{\wloc}=\overline{\{T_a\mid a\in \bddf\}}=\overline{\bddfG\ast \traceop}.$$
    \end{theorem}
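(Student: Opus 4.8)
The plan is to prove all four equalities at once by assembling the pieces already in hand into a short cycle of inclusions, all four sets turning out to have the common norm closure $\toepalg$. I would organize the argument around the closed set $\overline{\{T_a\mid a\in \bddf\}}$ and show each of the other three closures coincides with it.

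First I would settle $\overline{\wloc}=\overline{\{T_a\mid a\in \bddf\}}$. The inclusion $\overline{\{T_a\mid a\in \bddf\}}\subset\overline{\wloc}$ is immediate from the fact (due to \cite{IMW13}) that $\wloc$ contains every Toeplitz operator with bounded symbol. The reverse inclusion is exactly the content of the Proposition just established, $\wloc\subset\overline{\{T_a\mid a\in \bddf\}}$, upon taking norm closures. Next, to bring in $\toepalg$, I would use that $\wloc$ is a \emph{selfadjoint subalgebra} of $\bdd$ (again \cite{IMW13}), so its norm closure $\overline{\wloc}$ is a $C^*$-subalgebra of $\bdd$ containing all the generators $T_a$, $a\in\bddf$; hence $\toepalg\subset\overline{\wloc}$. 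Conversely, each $T_a$ lies in the closed algebra $\toepalg$, so $\overline{\{T_a\mid a\in\bddf\}}\subset\toepalg$. Combining these gives $\toepalg=\overline{\wloc}=\overline{\{T_a\mid a\in\bddf\}}$.

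It remains to insert $\overline{\bddfG\ast\traceop}$ into the chain. Lemma \ref{lem:convol_traceclass} already gives $\bddfG\ast\traceop\subset\overline{\{T_a\mid a\in\bddf\}}$, so $\overline{\bddfG\ast\traceop}\subset\toepalg$. For the reverse inclusion I would recall the convolution representation $T_a=a\ast\Phi$ with $\Phi=1\otimes 1$: since $\Phi$ is rank one, hence trace class, and $a\in\bddf$ is identified with a bounded function on $G$, this exhibits $T_a\in\bddfG\ast\traceop$ for every $a\in\bddf$. Taking closures yields $\toepalg=\overline{\{T_a\mid a\in\bddf\}}\subset\overline{\bddfG\ast\traceop}$, closing the cycle.

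There is no real computational obstacle here, as the proof is purely an assembly of the established lemmas; the one point demanding care is the bootstrap in the second step. Passing from ``norm closure of the Toeplitz operators'' to the full $C^*$-algebra $\toepalg$ uses essentially that $\wloc$ is a selfadjoint \emph{subalgebra} (not merely a subspace) containing the Toeplitz operators, which is precisely what forces $\overline{\wloc}$ to be closed under products and adjoints and hence to contain the entire $C^*$-algebra generated by the $T_a$.
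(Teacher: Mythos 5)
Your proof is correct and takes essentially the same route as the paper: both arguments sandwich $\wloc$ between $\{T_a\mid a\in \bddf\}$ and $\overline{\{T_a\mid a\in \bddf\}}$, invoke the minimality of $\toepalg$ (which, as you rightly make explicit, rests on $\wloc$ being a selfadjoint subalgebra so that $\overline{\wloc}$ is a $C^*$-algebra), and obtain the last equality from the sandwich $\{T_a\mid a\in \bddf\}\subset \bddfG\ast\traceop\subset\toepalg$ via $T_a=a\ast\Phi$ and Lemma \ref{lem:convol_traceclass}. Your write-up in fact spells out the one point the paper leaves implicit under the phrase ``minimality of $\toepalg$.''
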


    \begin{proof}
        Note that we have 
        $$\{T_a\mid a\in \bddf\}\subset \wloc\subset \overline{\{T_a\mid a\in \bddf\}}$$
        Thus
        $$\overline{\wloc}=\overline{\{T_a\mid a\in \bddf\}}\subset\toepalg.$$
        Also, by the minimality of $\toepalg$, we have $\toepalg\subset \overline{\wloc}$. Then
        $\toepalg= \overline{\wloc}$ as needed. We also have
        $$\{T_a\mid a\in \bdd\}\subset \bddfG\ast \traceop\subset \toepalg=\overline{\{T_a\mid a\in \bddf\}},$$
        Then the equality $\toepalg=\overline{\bddfG\ast \traceop}$  holds.
    \end{proof}

\bibliographystyle{plain} 
\bibliography{bibi}

\end{document}